\numberwithin{equation}{section}
\newtheorem{theorem}{Theorem}[section]
\newtheorem{lemma}{Lemma}[section]
\newtheorem{proposition}{Proposition}[section]
\newtheorem{remark}{Remark}[section]
\newtheorem{definition}{Definition}[section]
\journal{Journal}
\begin{document}

\begin{frontmatter}



\title{Periodic solutions of a semilinear Euler-Bernoulli beam equation with variable coefficients
}


\author[ad1]{Hui Wei}
\ead{weihui01@163.com}
\author[ad2,ad3]{Shuguan Ji\corref{cor}}
\ead{jishuguan@hotmail.com}
\address[ad1]{Department of Mathematics, Luoyang Normal University, Luoyang 471934, P.R. China}
\address[ad2]{School of Mathematics and Statistics and Center for Mathematics and Interdisciplinary Sciences, Northeast Normal University, Changchun 130024, P.R. China}
\address[ad3]{School of Mathematics, Jilin University, Changchun 130012, P.R. China}
\cortext[cor]{Corresponding author.}

\begin{abstract}
This paper is devoted to the study of periodic solutions for a semilinear Euler-Bernoulli beam equation with variable coefficients. Such mathematical model may be described the infinitesimal, free, undamped in-plane bending vibrations of a thin straight elastic beam. When the frequency $\omega =\frac{2\pi}{T}$ is rational, some properties of the beam operator with variable coefficients are investigated. We obtain the existence of periodic solutions when the nonlinear term is monotone and bounded.
\end{abstract}

\begin{keyword}
Existence,  Periodic solutions,  Beam equation
\end{keyword}

\end{frontmatter}


\section{Introduction}

In this paper, we are concerned with the existence of periodic solutions to the following Euler-Bernoulli beam equation
\begin{equation}\label{eq1-1}
\rho(x)u_{tt} + (\eta(x)u_{xx})_{xx}+\rho(x)g(u)=f(t,x), \,  t\in \mathbb{R},\, 0<x<\pi,
\end{equation}
with the boundary conditions
\begin{equation}\label{eq1-2}
u(t,x)= 2(\alpha(x)+\beta(x))u_x(t,x)+ u_{xx}(t,x)=0, \ {\rm at} \ x=0 \ {\rm and} \ x=\pi,
\end{equation}
and with the periodic conditions
\begin{equation}\label{eq1-3}
u(t+T, x) = u(t, x),
\end{equation}
where $f(t,x)$ is a $T$-periodic function in time $t$ and the period $T$ satisfies
\begin{equation}\label{eq1-4}
T = 2\pi \frac{p}{q},    \ \ {\rm for} \ \ p, q \in \mathbb{N}^+ \ \ {\rm and} \ \ {\rm GCD} \ (p, q) =1.
\end{equation}

The equation \eqref{eq1-1} originated from the Euler-Bernoulli beam equation
\begin{equation*}
\rho(x)u_{tt} + (E\eta(x)u_{xx})_{xx}=0,
\end{equation*}
which can be used to account for the infinitesimal, free, undamped in-plane bending vibrations of a thin straight elastic beam. Here, $\rho$ is the linear mass density, $E$ is Young's modulus of the material, $\eta$ is the second moment of
area of the beam's cross-section and the product $E\eta$ is the bending stiffness. In this paper, $E$ is assumed to be a constant.

 The boundary conditions \eqref{eq1-2} mean that the two ends of a beam are constrained by some special rotational spring devices. In particular, when $\alpha(0)+\beta(0)=\alpha(\pi)+\beta(\pi)=0$, this boundary conditions correspond to the pinned-pinned one
\begin{equation}\label{eq1-a}
u(t,0)=u_{xx}(t,0)=u(t,\pi)=u_{xx}(t,\pi)=0.
\end{equation}


For the case  $\rho(x)=\eta(x)=C$ (a non-zero constant),  the equation \eqref{eq1-1} corresponds to the classical beam equation which is called the constant coefficient beam equation here. For the space dimension $d=1$ and the frequency $\omega =\frac{2\pi}{T}$ is rational, the existence of periodic solutions to the nonlinear beam equations with constant coefficient has been studied by many authors (see \cite{CS82,F88, L95,Ru12, Ru15}). It is well known that the variational method pioneered by Rabinowitz \cite{Ra67,Ra78} is a powerful tool for dealing with nonlinear problems, and it is closely related to compactness.
Recently, by variational method, Rudakov \cite{Ru18} proved that, under the boundary conditions \eqref{eq1-a}, there is a sequence of periodic solutions to the nonlinear beam equation
\begin{equation*}
u_{tt}+u_{xxxx}-cu_{xx}+g(t,x,u)=f(t,x), \ \ 0<x<\pi,
\end{equation*}
where $c$ satisfies the condition (A) in  \cite{Ru18} and the nonlinear term $g$ has a polynomial growth in $u$.
 The condition (A) can make sure that the inverse of the linearized operator of this problem is compact on its range.
In addition, the above works \cite{CS82,F88, L95,Ru12, Ru15} also adopted  variational method  because of the properties of the operator
\begin{equation*}
\mathcal{L} :=\frac{\partial^2 }{\partial t^2} +\frac{\partial^4 }{\partial x^4}.
\end{equation*}
More precisely, since the eigenvalues of $\mathcal{L}$ possess the explicit expression
\begin{equation}\label{eq1-5}
n^4 -(\omega m)^2, \ \  n\in \mathbb{N}^+, \  m\in \mathbb{Z},
\end{equation}
then  $0$ is the only eigenvalues with infinite multiplicity and the other eigenvalues have finite multiplicity and accumulate to infinity,  which implies that $\mathcal{L}^{-1}$ (the inverse of $\mathcal{L}$) is compact on its range.
For the case that the frequency $\omega$ is irrational, from \eqref{eq1-5}, it follows that $0$ is not the eigenvalues, but it is an accumulation point of the spectrum, which caused the ``small divisor" problem. A further investigation shows $\mathcal{L}^{-1}$ is not compact, thus variational method is invalid to deal with the problem.
KAM theory \cite{K87,Wa90} and Nash-Moser iteration technique \cite{B95,C93} are derived to solve such problem. Since  KAM theory is more efficient in finding quasi-periodic solutions, it was widely applied to the nonlinear beam equations with constant coefficient (see \cite{CG18,Sh16,S19,W12,W19}).

For the space dimension $d>1$, recently, Wang and Li \cite{W18} studied the beam equation with weak damping
\begin{equation*}
u_{tt}+\Delta^2u -\Delta u + u_t  =\Delta \Phi(u)+ \Delta\Psi(t,x), \ \ x\in \mathbb{R}^d,
\end{equation*}
where the nonlinear term $\Phi$ is a
smooth function  satisfying $\Phi(u)=O (u^2)$ as $u\rightarrow 0$. They obtained the existence and uniqueness of  periodic solution via  contraction mapping theorem. In \cite{CL18}, by  a para-differential method, Chen et al. got  a family of periodic solutions to the beam equation
\begin{equation*}
u_{tt} + \Delta^2u +\kappa u =\varepsilon \frac{\partial F}{\partial u}(\omega t, x,u,\varepsilon) + \varepsilon f(\omega t, x), \ \ x\in \Bbb T^d:= (\mathbb{R}/2\pi \mathbb{Z})^d,
\end{equation*}
where  the nonlinear term
$F$ is a $2\pi$-periodic smooth function  and satisfies $\partial^k_\xi F(t, x,\xi,\varepsilon)\mid_{\xi=0}\equiv 0$ for $k \leq 2$.

The nonlinear  PDEs with variable coefficients (i.e., $\rho(x)$ and $\eta(x)$ are two functions) has received great attention due to its wide application. For example,  the forced vibrations of a bounded nonhomogeneous string and the propagation of seismic waves in non-isotropic media are governed by the variable coefficients wave equation (see \cite{Bar96,Bar97a,Bar97b})
\begin{equation}\label{eq1-b}
\rho(x)u_{tt} - (\eta(x)u_{x})_{x}=0.
\end{equation}
Here, $\rho(x)$ and $\eta(x)$ respectively denote the rock density and the elasticity coefficient. Under some homogeneous boundary conditions, the existence of the periodic solutions to the equation \eqref{eq1-b} equipped with various nonlinearity has been extensively studied  in recent decades (see \cite{BB06,Bar97a, Chen15,Chen19, Ji08,Ji18,Ji11,Ma18,Ma19a,Ru17,W.19,WJ.19}).

In comparison with the variable coefficients wave equation, the periodic solutions of the nonlinear beam equation with variable coefficients were rarely studied. As far as we know,  the only work \cite{ChenXiv} is Chen et al.'s research on the beam equation
\begin{equation*}
\rho(x)u_{tt} + (\eta(x)u_{xx})_{xx}=f(\omega t,x,u), \, \ t\in \mathbb{R}, \  0<x<\pi, \\
\end{equation*}
together with the boundary conditions \eqref{eq1-a}.
They got the periodic solution of this problem  via Nash-Moser iteration technique. 
The purpose of this  paper is  to investigate the existence of the periodic solution to the problem \eqref{eq1-1}--\eqref{eq1-3} by variational method when $\omega$ is  rational. Compared with \cite{ChenXiv}, although we do not have to deal with the ``mall divisor'' problem anymore, several new challenges will arise: (i) The properties of the operator $L$ defined by
\begin{equation*}
L= \frac{\partial^2 }{\partial t^2} +  \rho^{-1}\frac{\partial^2 }{\partial x^2}\Big{(}\eta \frac{\partial^2}{\partial x^2}\Big{)}
\end{equation*}
have never been studied. Particularly, when we use Lemma \ref{lem3-1} (see Section \ref{sec:3}) to find periodic solutions, it requires that $L^{-1}$ is compact on its range, but it is't always the case from Remark \ref{re2-1} (see Section \ref{sec:2}). Thus a difficulty encountered is what conditions can guarantee that $L^{-1}$ is compact;
(ii) The eigenvalues of $L$ can only be given the asymptotical expression rather than the explicit expression like $\mathcal{L}$. As a result, it makes the study of the properties of $L$ more difficult.

The present paper aims to prove the existence of the periodic solutions to the problem \eqref{eq1-1}--\eqref{eq1-3} by Lemma \ref{lem3-1} when the frequency $\omega$ is rational. We investigate some properties of $L$, especially we prove $L^{-1}$ is compact on its range under some suitable conditions. Thereafter, when the nonlinear term $g$ is monotone and bounded, we obtain the existence of the periodic solution.

The rest of this paper is organized as follows. In Sect. \ref{sec:2}, we give
the definition of the weak solution of the problem \eqref{eq1-1}--\eqref{eq1-3} and study  some properties of the beam operator with variable coefficients. In  Sect. \ref{sec:3}, we introduce the Lemma \ref{lem3-1} owed to Br\'{e}zis and Nirenberg \cite{BN78} and use it to prove the main result Theorem \ref{th3-1}.

\section{Beam operator with variable coefficients}

\setcounter{equation}{0}

\label{sec:2}

Let $\Omega = (0, T) \times (0, \pi)$ and
\begin{eqnarray*}
\Psi = \{\psi \in C^\infty(\Omega) : \psi \ {\rm satisfies \ the \ boundary \ conditions} \ \eqref{eq1-2}, \\
\psi(0,x) = \psi(T,x) \ {\rm and} \ \psi_t(0,x) = \psi_t(T,x).\}.
\end{eqnarray*}
Set
\begin{equation*}
L^r(\Omega) = \Big\{ u: \|u\|^r_{L^r(\Omega)} = \int_\Omega |u(t,x)|^r \rho(x) \textrm{d}t \textrm{d}x <\infty\Big\}, \, \, r\geq 1.
\end{equation*}
Obviously, $\Psi$ is dense in $L^r(\Omega)$ for any $r\geq 1$,  $L^2(\Omega)$ is a Hilbert space with the inner product
\begin{equation*}
\langle u, v \rangle = \int_\Omega u(t,x)  \overline{v(t,x)} \rho(x)\textrm{d}t \textrm{d}x, \ \forall u, v \in L^2(\Omega),
\end{equation*}
and its norm is denoted by $\|\cdot\|$ for simplicity. In particular, the norm of $L^\infty(\Omega)$ is given by
\begin{equation*}
\|u\|_{L^\infty(\Omega)} = {\rm ess} \, \sup \{|u(t,x)|: (t,x)\in \Omega\}, \ \forall u \in L^\infty(\Omega).
\end{equation*}

Since $f(t,x)$ is a $T$-periodic function, it's sufficient to consider the  problem \eqref{eq1-1}--\eqref{eq1-3} in $\Omega$, and we rewrite it as follows
\begin{equation}\label{eq2-1}
\rho(x)u_{tt} + (\eta(x)u_{xx})_{xx}+\rho(x)g(u)=f(t,x), \,  \ (t, x) \in \Omega,
\end{equation}
with the boundary conditions
\begin{equation}\label{eq2-2}
u(t,x)= 2(\alpha(x)+\beta(x))u_x(t,x)+u_{xx}(t,x)=0, \ {\rm at} \ x=0 \ {\rm and} \ x=\pi,
\end{equation}
and with the periodic conditions
\begin{equation}\label{eq2-3}
u(0, x) = u(T, x), \ \ u_t(0, x) = u_t(T, x).
\end{equation}

\begin{definition}
Let $f\in L^2(\Omega)$, then a function $u \in L^2(\Omega)$ is said to be a weak solution to the  problem \eqref{eq2-1}--\eqref{eq2-3} if it satisfies
\begin{equation*}
\int_\Omega u(\rho\psi_{tt} + (\eta\psi_{xx})_{xx}){\rm d}t {\rm d}x + \int_\Omega g(u)\psi\rho{\rm d}t {\rm d}x= \int_\Omega f\psi {\rm d}t {\rm d}x, \ \  \forall \psi \in \Psi.
\end{equation*}
\end{definition}

The beam operator with variable coefficients is defined by
$$L \psi = \rho^{-1}\left(\rho \psi_{tt} + (\eta \psi_{xx})_{xx}\right), \ \forall \psi \in \Psi.$$
Clearly, $L$ is a linear operator.
Because we are interested in the weak solutions, the domain of $L$ is defined by
\begin{eqnarray*}
D(L)=\Big{\{}u\in L^2(\Omega): {\rm there \ exists} \ h \in L^2(\Omega) \ {\rm such \ that}  \quad \quad \quad\\
\int_\Omega u(\rho\psi_{tt} + (\eta\psi_{xx})_{xx}){\rm d}t {\rm d}x = \int_\Omega h\psi {\rm d}t {\rm d}x, \ \  \forall \psi \in \Psi.\Big{\}}.
\end{eqnarray*}
The operator $L$ maps $L^2(\Omega)$ into itself and is symmetric under the boundary conditions \eqref{eq2-2}.
Furthermore, noting that $\Psi \subset D(L)$ and is dense in $L^2(\Omega)$, one can verify that $L$ is a closed, dense, self-adjoint operator.

To  study the  periodic solutions of the problem \eqref{eq2-1}--\eqref{eq2-3}, we first pay an investigation to the eigenvalues and eigenfunctions of $L$.

It is well known that  the complete orthonormal system
\begin{equation*}
\phi_m(t) = T^{-\frac{1}{2}} e^{i\vartheta_m t}, \ \vartheta_m = 2\pi mT^{-1}, \ m \in \mathbb{Z},
\end{equation*}
forms a basis of $L^2(0,T)$.

Now, we consider the eigenvalue problems associated with the Euler-Bernoulli operator
\begin{equation*}
\mathcal{E}\varphi_n(x):=\frac{1}{\rho(x)}(\eta(x)\varphi''_n(x))''=\mu_n\varphi_n(x), \ \ n\in \mathbb{N}^+,
\end{equation*}
with the boundary conditions
\begin{equation*}
\varphi_n(x)=2(\alpha(x)+\beta(x))\varphi'_n(x)+\varphi''_n(x)=0, \ {\rm at} \ x=0 \ {\rm and} \ x=\pi,
\end{equation*}
where $\varphi'_n(x)= \frac{{\rm d}}{{\rm d}x}\varphi_n(x)$.

In order to characterize the eigenvalues of $\mathcal{E}$, it needs  the following  assumptions.

(A1) Let $\alpha(x)$, $\beta(x)$ have real value and belong to $W^3(0, \pi)$ which denotes the usual Sobolev space, i.e.,
\begin{equation*}
W^3(0, \pi)=\Big{\{} y\in L^1(0,\pi): y^{(3)} \ {\rm exists \ in \ the \ weak \ sense \ and \ belonges  \ to} \ L^1(0,\pi). \Big{\}}.
\end{equation*}

(A2) Let $\eta(0) =1$, $\rho(x)$ and $\eta(x)$ satisfy
\begin{equation*}
\rho(x) = \rho(0)e^{4\int_0^x \beta(s){\rm d}s}>1, \ \ \ \ \eta(x) = e^{4\int_0^x \alpha(s){\rm d}s},
\end{equation*}
and  let
\begin{equation*}
\int_0^\pi \Big( \frac{\rho}{\eta}\Big)^{\frac{1}{4}}{\rm d}x=\pi.
\end{equation*}

Based on the assumptions (A1) and (A2), by  the unitary Barcilon-Gottlieb transformation \cite{B87,G87} (namely, the Liouville-type transformation for a fourth-order operator), Badanin and Korotyaev \cite{BK15} obtained
\begin{equation}\label{eq2-4}
\mu_n = n^4 +2n^2a + b_n +o\big(\frac{1}{n}\big),
\end{equation}
where $a$ is a constant and $b_n$ is bounded independent of $n$. In fact, Badanin and Korotyaev considered the problem on the interval $[0,1]$, by the scale transformation $x\mapsto  \pi x$, we got the asymptotical formula \eqref{eq2-4}.
In addition, due to the appearance of $\exp(\int_0^\pi \beta(s)-\alpha(s){\rm d}s)$ in the expression $a$ (see \cite{BK15}), it is reasonable to assume that $a$ is irrational in the Proposition \ref{pro2-1}.


From \eqref{eq2-4}, $\mu_n$ has finite multiplicity  and accumulates to infinity, then  we can find a subsequence $\{\varphi_{n_k}(x)\}$ which constitutes a complete orthonormal basis of $L^2(0,\pi)$. In fact, on the one hand the eigenfunctions  corresponding to the different eigenvalues are always orthogonal. On the other hand, for the case that the eigenfunctions  correspond to the same eigenvalues, if necessary, we carry out Gram-Schmidt orthogonalization procedure to get the orthogonal eigenfunctions.
For the sake of convenience, we still use $\{\varphi_{n}(x)\}$ to denote $\{\varphi_{n_k}(x)\}$.

Moreover, we add the normalization
\begin{equation*}
\|\varphi_n\|^2_{L^2(0,\pi)}=\int_0^\pi \varphi^2_n(x) \rho(x) {\rm d}x =1.
\end{equation*}
In virtue of $\rho(x)>1$, we have $|\varphi_n(x)| <1, \ \forall x\in(0,\pi)$ and $n\in \mathbb{N}^+$.

Summarizing the above discussion, the eigenfunction system $\{\phi_m(t)\varphi_n(x)\}$ forms a complete orthonormal basis of $L^2(\Omega)$. In addition, it is easy to see the eigenvalues of $L$ have the form
\begin{equation*}
\lambda_{mn}=\mu_n-\vartheta^2_m.
\end{equation*}
Denote the spectrum of $L$ by
\begin{equation*}
\Lambda(L)=\{\lambda_{mn}:\lambda_{mn}=\mu_n-\vartheta^2_m\}.
\end{equation*}

Thus, the null space $N(L)$ of $L$ is given by
\begin{equation*}
N(L)= {\rm span} \{\phi_m(t)\varphi_n(x): \mu_n=\vartheta^2_m, \ m \in \mathbb{Z}, n\in \mathbb{N}^+\},
\end{equation*}
and its orthogonal complement is
\begin{equation*}
N(L)^\perp= {\rm span} \{\phi_m(t)\varphi_n(x): \mu_n\neq\vartheta^2_m, \ m \in \mathbb{Z}, n\in \mathbb{N}^+\}.
\end{equation*}

\begin{proposition}\label{pro2-1}
Let $T$ satisfy \eqref{eq1-4}, and let {\rm (A1)} and {\rm (A2)} hold. 
If $a$ is an irrational number, then $\dim N(L)< \infty$ and $\Lambda(L)$ is an unbounded discrete set.
\end{proposition}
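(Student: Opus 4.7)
The plan is to reduce both assertions to a single Diophantine observation squeezed out of the asymptotic expansion \eqref{eq2-4}. First I would use \eqref{eq1-4} to write $\vartheta_m^2 = m^2 q^2/p^2$, so that for any fixed $R \geq 0$ the intersection $\Lambda(L) \cap [-R,R]$ is indexed by pairs $(m,n) \in \mathbb{Z} \times \mathbb{N}^+$ satisfying $|p^2 \mu_n - m^2 q^2| \leq p^2 R$. The case $m = 0$ gives only finitely many $n$ because $\mu_n \to \infty$ by \eqref{eq2-4}, and the range of small $n$ is obviously finite; I therefore reduce to examining the equation $|m| = (p/q)\sqrt{\mu_n - \lambda_{mn}}$ for large $n$, where $\mu_n - \lambda_{mn} > 0$ and $|m|$ must be a positive integer.

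Next I would expand the square root. Uniformly for $|\lambda_{mn}| \leq R$, the boundedness of $b_n$ in \eqref{eq2-4} yields
\begin{equation*}
\sqrt{\mu_n - \lambda_{mn}} = n^2\Bigl(1 + \frac{2a}{n^2} + \frac{b_n - \lambda_{mn}}{n^4} + o(n^{-5})\Bigr)^{1/2} = n^2 + a + O(n^{-2}).
\end{equation*}
Multiplying by $p$ and recalling that $q|m| \in \mathbb{Z}$, this rearranges to
\begin{equation*}
q|m| - p n^2 = pa + O(n^{-2}), \qquad n \to \infty.
\end{equation*}
The left-hand side is an integer, while the right-hand side converges to the irrational number $pa$; by the discreteness of $\mathbb{Z} \subset \mathbb{R}$ the integer sequence $q|m| - p n^2$ would have to be eventually constant and hence equal to $pa$, contradicting the irrationality hypothesis on $a$. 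Only finitely many large $n$ can therefore contribute to $\Lambda(L) \cap [-R,R]$.

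Combining the large-$n$ and small-$n$ counts, $\Lambda(L) \cap [-R,R]$ is finite for every $R$, which is the discreteness of $\Lambda(L)$. Unboundedness is immediate from the inclusion $\{\mu_n\}_{n \geq 1} \subset \Lambda(L)$ (take $m = 0$) together with $\mu_n \to \infty$. The finiteness $\dim N(L) < \infty$ is just the case $R = 0$ of the same count. The hard part is securing the $O(n^{-2})$ uniformity in the square-root expansion from the relatively weak information that $b_n$ is merely bounded and the correction in \eqref{eq2-4} is merely $o(1/n)$; once this uniformity is in hand, the irrationality of $a$ delivers the conclusion in one line.
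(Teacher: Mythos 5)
Your argument is correct and rests on exactly the same mechanism as the paper's proof: the asymptotics \eqref{eq2-4} together with $\vartheta_m^2=m^2q^2/p^2$ force the integer $q|m|-pn^2$ to approach the irrational number $pa$ along any infinite family of bounded eigenvalues, which is impossible. The paper packages this as the factorization \eqref{eq2-e}--\eqref{eq2-5} (one factor tends to infinity while the other, an integer plus $pa$, is bounded away from zero) rather than your square-root expansion, but dividing \eqref{eq2-5} by the large factor reproduces your relation $q|m|-pn^2=pa+O(n^{-2})$, so the two proofs are essentially identical.
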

\begin{proof}



In virtue of
\begin{equation*}
\lambda_{mn}=\mu_n-\vartheta^2_m,
\end{equation*}
we have
\begin{equation}\label{eq2-e}
\lambda_{mn}=\frac{1}{p^2}(pn^2+qm+pa)(pn^2-qm+pa)+b_n-a^2+o\big(\frac{1}{n}\big),
\end{equation}
 Letting $\lambda_{mn} =0$, a direct calculation shows
\begin{equation}\label{eq2-5}
(pn^2+qm+pa)(pn^2-qm+pa)=p^2(a^2-b_n)+o\big(\frac{1}{n}\big).
\end{equation}

Since $a$ is irrational, the left side of \eqref{eq2-5} accumulate to infinity, i.e.,
\begin{equation*}
pn^2+q|m|+pa \rightarrow \infty, \ \ |m|, \, n \rightarrow \infty.
\end{equation*}
Moreover, since $b_n$ is bounded independent of $n$, then the right side of \eqref{eq2-5} is bounded independent of $n$. Therefore, at most a finite number of pairs $(m,n)$ satisfy \eqref{eq2-5}, which implies $\dim N(L)< \infty$.

Using again that $a$ is irrational, it follows, from \eqref{eq2-e}
\begin{equation}\label{eq2-a}
\lambda_{mn} \rightarrow \infty, \ \ {\rm as} \ |m|, n \rightarrow \infty,
\end{equation}
 which implies that $\Lambda(L)$ is an unbounded discrete set. The proof is completed.
\end{proof}

\begin{remark}\label{re2-1}
In this paper, we study the existence of the periodic solutions  to the problem \eqref{eq2-1}--\eqref{eq2-3} by Lemma \ref{lem3-1} which requires the compactness of $L^{-1}$. In this remark and the Proposition \ref{pro2-2}, we will show that the compactness of $L^{-1}$ may be related to the arithmetic properties of $a$.

Recalling that $\{b_n\}$ is a bounded sequence, we can extract a subsequence  $\{b_{n_k}\}$ such that $b_{n_k}\rightarrow b$ for some $b \in\mathbb{R}$. 
Set
\begin{equation*}
\tilde{b}=b-a^2.
\end{equation*}

Assume that $a$ is rational and $(m,n_k)$ satisfy
\begin{equation*}
p n_k^2+q|m|+pa=0,
\end{equation*}
then, from \eqref{eq2-e}, we have
\begin{equation*}
\lambda_{mn_k} \rightarrow \tilde{b}, \ \ |m|, k\rightarrow \infty.
\end{equation*}
 Consequently, one can verify that $L^{-1}$ is unbounded if $\tilde{b} = 0$,  $L^{-1}$ is not compact if $\tilde{b} \neq 0$. 
\end{remark}

\begin{proposition}\label{pro2-2}
Let $T$ satisfy \eqref{eq1-4}, and let {\rm (A1)} and {\rm (A2)} hold. If $a$ is an irrational number, then  the range $R(L)$ of $L$ is closed in $L^2(\Omega)$, $N(L)^\perp= R(L)$, $L^2(\Omega)=R(L)\oplus N(L)$ and $L^{-1}\in L(R(L),R(L))$ (the set of bounded linear  operator) is compact, and the following estimations hold:
\begin{eqnarray}
&&\|L^{-1} h\| \leq \frac{1}{\delta} \|h\|, \ \ \forall h\in R(L), \label{eq2-6} \\
&&\langle L^{-1} h, h \rangle \geq -\frac{1}{\gamma} \|h\|^2, \ \ \forall h\in R(L),  \label{eq2-7} \\
&&\|L^{-1} h\|_{L^\infty(\Omega)} \leq C \| h\|,  \ \ \forall h\in R(L)\label{eq2-8},
\end{eqnarray}
where $\delta:= \inf\{|\mu_n-\vartheta^2_m|: \mu_n\neq \vartheta^2_m\}$, $\gamma:=\inf\{\vartheta^2_m -\mu_n: \vartheta^2_m >\mu_n\}$ and $C$ is a constant independent of $h$.
\end{proposition}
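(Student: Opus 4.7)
The plan is to work in the complete orthonormal basis $\{\phi_m(t)\varphi_n(x)\}$ of $L^2(\Omega)$ constructed above, in which $L$ acts diagonally with spectrum $\Lambda(L)=\{\lambda_{mn}\}$, and to realize $L^{-1}$ explicitly by spectral multiplication.

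First I would set $\delta:=\inf\{|\lambda_{mn}|:\lambda_{mn}\neq 0\}$ and observe that $\delta>0$: by Proposition~\ref{pro2-1}, in particular \eqref{eq2-a}, $\Lambda(L)$ is discrete with $\infty$ as its only accumulation point and $\dim N(L)<\infty$. Writing $h=\sum h_{mn}\phi_m\varphi_n$, the orthogonal decomposition $L^2(\Omega)=N(L)\oplus N(L)^\perp$ is standard; for $h\in N(L)^\perp$ (i.e., $h_{mn}=0$ whenever $\lambda_{mn}=0$) I would define
\begin{equation*}
L^{-1}h:=\sum_{\lambda_{mn}\neq 0}\frac{h_{mn}}{\lambda_{mn}}\phi_m(t)\varphi_n(x).
\end{equation*}
Parseval together with $|\lambda_{mn}|\geq\delta$ immediately yields $\|L^{-1}h\|\leq\delta^{-1}\|h\|$, which is \eqref{eq2-6}. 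Moreover $\sum\lambda_{mn}^{2}|h_{mn}/\lambda_{mn}|^{2}=\|h\|^{2}<\infty$ puts $L^{-1}h$ in $D(L)$ with $L(L^{-1}h)=h$, whence $N(L)^\perp\subseteq R(L)$. The reverse inclusion follows from the self-adjointness of $L$, so $R(L)=N(L)^\perp$ is closed and $L^2(\Omega)=R(L)\oplus N(L)$. Compactness of $L^{-1}$ comes from the fact that it is diagonal with eigenvalues $\lambda_{mn}^{-1}\to 0$, hence the operator-norm limit of its finite-rank truncations.

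For \eqref{eq2-7} I would split
\begin{equation*}
\langle L^{-1}h,h\rangle=\sum_{\lambda_{mn}\neq 0}\frac{|h_{mn}|^{2}}{\lambda_{mn}}
\end{equation*}
according to the sign of $\lambda_{mn}$: the positive eigenvalues contribute nonnegatively, while for each negative one $1/\lambda_{mn}=-1/(\vartheta_m^{2}-\mu_n)\geq-\gamma^{-1}$, so summing gives the stated lower bound $-\gamma^{-1}\|h\|^{2}$.

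The main obstacle is \eqref{eq2-8}. Using $|\phi_m(t)|^{2}=T^{-1}$ together with the pointwise bound $|\varphi_n(x)|<1$ recorded just after the normalization of the $\varphi_n$, Cauchy--Schwarz gives
\begin{equation*}
|L^{-1}h(t,x)|\leq T^{-1/2}\|h\|\Bigl(\sum_{\lambda_{mn}\neq 0}\lambda_{mn}^{-2}\Bigr)^{1/2},
\end{equation*}
so the task reduces to showing the absolute convergence of $\sum\lambda_{mn}^{-2}$. I would combine \eqref{eq2-e} with $\sqrt{\mu_n}=n^{2}+a+O(n^{-2})$ to write, using the symmetry $\lambda_{-m,n}=\lambda_{mn}$ to restrict to $m\geq 0$, $\lambda_{mn}\approx(n^{2}+a-qm/p)(n^{2}+a+qm/p)$. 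Because $\gcd(p,q)=1$ the residues $pn^{2}\bmod q$ take only $q$ values, and the irrationality of $a$ forces $(r+pa)/q\notin\mathbb{Z}$ for every $r\in\{0,\dots,q-1\}$, producing a constant $c_{0}>0$ with ${\rm dist}(p(n^{2}+a)/q,\mathbb{Z})\geq c_{0}$ for all sufficiently large $n$. Letting $m^{*}(n)$ be the nearest integer to $p(n^{2}+a)/q$, this forces $|\lambda_{m^{*}(n),n}|\geq c\,n^{2}$; for the remaining $m$ the factor $|n^{2}+a-qm/p|$ grows linearly in $|m-m^{*}(n)|$, so $|\lambda_{mn}|\geq c\,|m-m^{*}(n)|n^{2}$ in the near-resonant regime and $|\lambda_{mn}|\geq c\,m^{2}$ once $m$ is far from $m^{*}(n)$. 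Summing first over $m$ yields $\sum_{m}\lambda_{mn}^{-2}\leq Cn^{-4}$, and the subsequent sum in $n$ converges; the finitely many small $n$ are handled directly via $\delta>0$. This Diophantine separation is where the irrationality of $a$ is essential, and is the step I expect to require the most care.
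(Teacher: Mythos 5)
Your proposal follows the same overall spectral strategy as the paper's proof: diagonalize $L$ in the basis $\{\phi_m\varphi_n\}$, define $L^{-1}$ by division by $\lambda_{mn}=\mu_n-\vartheta_m^2$, and obtain \eqref{eq2-6}, \eqref{eq2-7}, $R(L)=N(L)^\perp$, the direct sum decomposition and compactness from $\delta>0$, $\gamma>0$ and $|\lambda_{mn}|\to\infty$. The one place where you genuinely diverge is the convergence of $\sum_{\lambda_{mn}\neq 0}\lambda_{mn}^{-2}$, which is the crux of \eqref{eq2-8} (the paper also leans on it for compactness, whereas your compactness argument only needs $\lambda_{mn}^{-1}\to 0$ and is cleaner). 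The paper establishes \eqref{eq2-12} by comparison with $\sum_{m,n}(p^2n^4-q^2m^2)^{-2}$, asserting that $p^2(\mu_n-\vartheta_m^2)/(p^2n^4-q^2m^2)\to 1$; that assertion fails along near-resonant sequences with $pn^2-qm=O(1)$ (where the comparison denominator can even vanish while $\mu_n-\vartheta_m^2\sim 2an^2$), so the published comparison is not justified as written, even though the conclusion is true. Your Diophantine argument --- $\operatorname{dist}\bigl(p(n^2+a)/q,\mathbb{Z}\bigr)\geq c_0>0$ uniformly in $n$ because $pn^2 \bmod q$ takes only $q$ values and $a\notin\mathbb{Q}$, hence $|pn^2\pm qm+pa|\geq qc_0$, hence $|\lambda_{mn}|\gtrsim n^2\max(1,|m-m^*(n)|)$ after absorbing the bounded correction $b_n-a^2+o(1/n)$, giving $\sum_m\lambda_{mn}^{-2}\leq Cn^{-4}$ --- fills exactly this gap, and it makes explicit where the irrationality of $a$ and the condition $\gcd(p,q)=1$ enter. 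The price is the extra bookkeeping around $m^*(n)$ and the finitely many small $n$, which you correctly dispatch using $\delta>0$ and the quadratic decay of $\lambda_{mn}$ in $m$ for fixed $n$. In short: same architecture, but your treatment of the key summability step is a genuine (and more rigorous) alternative to the paper's.
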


\begin{proof}
Let $u_{mn}$ and $h_{mn}$ be respective  the Fourier coefficients of $u$ and $h$, i.e.,
\begin{eqnarray*}
&u=\sum_{m,n}u_{mn}\phi_m(t)\varphi_n(x), \ \ \ &u_{mn}=\int_{\Omega}u\varphi_n\overline{\phi}_m\rho{\rm d}x{\rm d}t,\\
&h=\sum_{m,n}h_{mn}\phi_m(t)\varphi_n(x), \ \ \ &h_{mn}=\int_{\Omega}h\varphi_n\overline{\phi}_m\rho{\rm d}x{\rm d}t.
\end{eqnarray*}

According to the definition of $L$, it follows that the operator equation $Lu=h$ holds if and only if the following equation holds
\begin{equation}\label{eq2-9}
(\mu_n-\vartheta^2_m)u_{mn} = h_{mn}.
\end{equation}

If $u_{mn}$ is a solution of \eqref{eq2-9}, by the definition of $N(L)^\perp$, it follows $h \in N(L)^\perp$. Apparently, we have $N(L)^\perp \subset R(L)$. 
Thus, a necessary condition for the equation \eqref{eq2-9} is  $h \in N(L)^\perp$.

Now, we prove that the condition $h \in N(L)^\perp$ is also sufficient. That is to say, it needs to prove the series
\begin{equation}\label{eq2-10}
\sum_{\mu_n\neq\vartheta^2_m} |u_{mn}|^2=\sum_{\mu_n\neq\vartheta^2_m} \Big{|}\frac{h_{mn}}{\mu_n-\vartheta^2_m}\Big{|}^2
\end{equation}
is convergent, which implies $R(L) \subset N(L)^\perp$.

From Proposition \ref{pro2-1}, $\Lambda(L)$ is an unbounded discrete set, then we have
\begin{equation*}
\delta>0 \ \ {\rm and} \ \ \gamma>0.
\end{equation*}
By \eqref{eq2-10}, we have
\begin{equation*}
\sum_{\mu_n\neq\vartheta^2_m} |u_{mn}|^2=\sum_{\mu_n\neq\vartheta^2_m} \Big{|}\frac{h_{mn}}{\mu_n-\vartheta^2_m}\Big{|}^2 \leq \frac{1}{\delta^2}\sum_{\mu_n\neq\vartheta^2_m} |h_{mn}|^2=\frac{1}{\delta^2}\|h\|^2,
\end{equation*}
which shows the estimation \eqref{eq2-6} holds. Consequently, we get $R(L) = N(L)^\perp$ and $L^2(\Omega)=R(L)\oplus N(L)$.

Noting $\gamma >0$, we have
\begin{equation*}
\langle L^{-1} h, h \rangle = \sum_{\mu_n\neq\vartheta^2_m} \frac{h_{mn}^2}{\mu_n-\vartheta^2_m} \geq -\frac{1}{\gamma} \|h\|^2.
\end{equation*}

According to \eqref{eq2-e}, we have
\begin{equation*}
\lim_{|m|,n\rightarrow \infty}\frac{p^2(\mu_n-\vartheta^2_m)}{p^2n^4-q^2m^2} =1.
\end{equation*}
Since the series
\begin{equation*}
\sum_{m,n} \frac{1}{(p^2n^4-q^2m^2)^2}
\end{equation*}
is convergent, which implies
\begin{equation}\label{eq2-12}
\sum_{m,n} \frac{1}{(\mu_n-\vartheta^2_m)^2}
\end{equation}
is convergent. Therefore, recalling $|\varphi_n(x)| <1$, by Cauchy-Schwarz inequality, it follows
\begin{equation*}
|L^{-1}h(t,x)| \leq \sum_{\mu_n\neq\vartheta^2_m} \Big{|}\frac{h_{mn}}{\mu_n-\vartheta^2_m}\Big{|} \leq \Big{(}\sum_{\mu_n\neq\vartheta^2_m} |h_{mn}|^2\Big{)}^{\frac{1}{2}}\Big{(}\sum_{\mu_n\neq\vartheta^2_m} \frac{1}{(\mu_n-\vartheta^2_m)^2}\Big{)}^{\frac{1}{2}}
\leq C\|h\|,
\end{equation*}
which show \eqref{eq2-8} holding.

At the end of this proposition, we show that the operator $L^{-1}$  is compact. The finite dimensional operator $L^{-1}_N$ is defined by
\begin{equation*}
L^{-1}_N h =\sum_{|m|, n< N} \frac{h_{mn}}{\mu_n-\vartheta^2_m} \ \ {\rm with} \ \ \mu_n\neq\vartheta^2_m, \ \ {\rm for} \ \ N \in \mathbb{N}^+.
\end{equation*}
Therefore, it is sufficient to show
\begin{equation*}
\lim_{N\rightarrow \infty} \|L^{-1}- L^{-1}_N\|_o =0,
\end{equation*}
where $\|\cdot\|_o$ denotes the usual operator norm.
We have
\begin{equation*}
\|L^{-1}- L^{-1}_N\|_o^2 = \sup_{\|h\|=1} \|(L^{-1}- L^{-1}_N) h\| \leq \sum_{|m|, n> N} \frac{1}{(\mu_n-\vartheta^2_m)^2}.
\end{equation*}
Since the series $\sum_{m,n} \frac{1}{(\mu_n-\vartheta^2_m)^2}$ is convergent, it follows
\begin{equation*}
\lim_{N\rightarrow \infty} \sum_{|m|, n> N} \frac{1}{(\mu_n-\vartheta^2_m)^2} =0.
\end{equation*}
Consequently, $L^{-1}$ is compact. We arrive at the result.
\end{proof}

\section{The main result}
\setcounter{equation}{0}
\label{sec:3}

The proof of our main result (Theorem \ref{th3-1}) depends on the following lemma.
\begin{lemma}[Corollary I.2 in \cite{BN78}]\label{lem3-1}
Let $H$ be a real Hilbert space, $L : D(L)\subset H \rightarrow H$ be a closed, dense linear operator with closed range $R(L)$ and $G: H\rightarrow H$ be a monotone demicontinuous (i.e., $G$ is continuous from strong $H$ into weak $H$) operator. Assume

{\rm (i)} $R(L)=N(L)^\perp$;

{\rm (ii)} $L^{-1}: R(L) \rightarrow R(L)$ is  compact;

{\rm (iii)} $\langle Lu, u\rangle\geq -\frac{1}{\gamma} \|Lu\|^2, \ \ \forall u\in D(L)$;

{\rm (iv)} $\|Gu\| \rightarrow \infty$ as $\|u\| \rightarrow \infty$;

{\rm (v)} $ \langle Gu-Gv, u \rangle\geq \frac{1}{\sigma} \|Gu\|^2 -C(v)$ holds for some positive constant $\sigma <\gamma$ and any $u, v\in H$, where $C(v)$ depends only on $v$.

Then, the operator $L+G$ is onto.
\end{lemma}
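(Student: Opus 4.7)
The plan is to prove surjectivity of $L+G$ by a regularization-and-compactness scheme that exploits the compactness of $L^{-1}$ on $R(L)$ and the strict inequality $\sigma<\gamma$ between the two coercivity constants. Since $L$ is a closed, densely defined self-adjoint operator with closed range and $L^{-1}\colon R(L)\to R(L)$ compact, its spectrum is discrete with $\infty$ as the only accumulation point, so for every $\epsilon>0$ the shifted operator $\epsilon I+L$ is self-adjoint, invertible, and has compact inverse on all of $H$. I would begin by introducing the auxiliary equation $\epsilon u_{\epsilon}+Lu_{\epsilon}+Gu_{\epsilon}=f$, rewritten as the fixed-point equation $u_{\epsilon}=(\epsilon I+L)^{-1}(f-Gu_{\epsilon})$. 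Because $G$ is monotone and demicontinuous and $(\epsilon I+L)^{-1}$ is compact, existence of a solution $u_\epsilon$ follows from a standard Galerkin approximation combined with the Browder–Minty theorem.

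Next I would derive $\epsilon$-uniform a priori estimates. Pairing the regularized equation with $u_\epsilon$ and invoking (iii) yields $\langle Lu_{\epsilon},u_{\epsilon}\rangle\geq -\tfrac{1}{\gamma}\|Lu_{\epsilon}\|^{2}$, while (v) applied with $v=0$ gives $\langle Gu_{\epsilon},u_{\epsilon}\rangle\geq \tfrac{1}{\sigma}\|Gu_{\epsilon}\|^{2}-C(0)-\|G(0)\|\,\|u_{\epsilon}\|$. Reading $Lu_{\epsilon}=f-\epsilon u_{\epsilon}-Gu_{\epsilon}$ to control $\|Lu_{\epsilon}\|^{2}$ by a constant multiple of $\|f\|^{2}+\epsilon^{2}\|u_\epsilon\|^2+\|Gu_{\epsilon}\|^{2}$, the strict inequality $\sigma<\gamma$ is exactly what allows the coefficient of $\|Gu_{\epsilon}\|^{2}$ to remain positive after cancellation, producing a uniform bound on $\|Gu_{\epsilon}\|$. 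The coercivity hypothesis (iv) then promotes this to a uniform bound on $\|u_{\epsilon}\|$, which in turn yields a uniform bound on $\|Lu_{\epsilon}\|$.

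To pass to the limit, I would split $u_{\epsilon}=v_{\epsilon}+w_{\epsilon}$ according to the orthogonal decomposition $H=N(L)\oplus R(L)$ supplied by (i). The uniform bound on $Lw_{\epsilon}=Lu_{\epsilon}$ together with the compactness of $L^{-1}|_{R(L)}$ forces strong convergence of $w_{\epsilon}$ along a subsequence, and the boundedness of $v_{\epsilon}$ gives weak convergence of $v_\epsilon$ in $N(L)$; note also that $\epsilon u_{\epsilon}\to 0$ strongly. Setting $u=v+w$, the linear term $Lu_{\epsilon}\rightharpoonup Lu$, and by demicontinuity $Gu_{\epsilon}\rightharpoonup \chi$ for some $\chi\in H$, so the limit equation reads $Lu+\chi=f$.

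The main obstacle is the identification $\chi=Gu$, because $v_\epsilon$ is only weakly convergent and $G$ need not be strongly continuous. I expect to handle this by the classical Minty monotonicity trick: for arbitrary $z\in H$ and $t>0$, monotonicity of $G$ gives $\langle Gu_{\epsilon}-G(u+tz),\,u_{\epsilon}-(u+tz)\rangle\geq 0$; using the regularized equation to substitute for $Gu_{\epsilon}$ and passing to the limit in $\epsilon$ using the strong convergence of $w_\epsilon$ and the weak convergence of $v_\epsilon$, one obtains $\langle \chi-G(u+tz),\,-tz\rangle\geq 0$; dividing by $t$ and letting $t\to 0^{+}$ yields $\langle \chi-Gu,z\rangle\geq 0$ for every $z\in H$, hence $\chi=Gu$. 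Inserting this into $Lu+\chi=f$ finishes the proof.
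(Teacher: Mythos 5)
First, note that the paper does not prove this statement at all: it is quoted verbatim from Br\'ezis--Nirenberg \cite{BN78} and used as a black box in the proof of Theorem \ref{th3-1}, so there is no internal proof to compare against. Your overall architecture (regularize by $\epsilon I$, derive $\epsilon$-uniform bounds from the gap $\sigma<\gamma$, split along $N(L)\oplus R(L)$, use compactness of $L^{-1}$ plus Minty's trick) is indeed the classical scheme for results of this type, and your limit passage in the last two paragraphs is essentially sound. But the a priori estimate, which is the heart of the matter, has a genuine gap. Testing the regularized equation against $u_\epsilon$ and using (v) with $v=0$ gives $\langle Gu_\epsilon,u_\epsilon\rangle\geq \frac{1}{\sigma}\|Gu_\epsilon\|^2-C(0)-\|G(0)\|\,\|u_\epsilon\|$, and the right-hand side of the equation contributes $\langle f,u_\epsilon\rangle\leq\|f\|\,\|u_\epsilon\|$. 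After absorbing $\|Lu_\epsilon\|^2\leq(1+\delta)\|Gu_\epsilon\|^2+C_\delta(\|f\|^2+\epsilon^2\|u_\epsilon\|^2)$ you are left with $\frac{\epsilon}{2}\|u_\epsilon\|^2+c_0\|Gu_\epsilon\|^2\leq C\big(1+\|u_\epsilon\|\big)$, where the linear term $\big(\|f\|+\|G(0)\|\big)\|u_\epsilon\|$ cannot be absorbed except at the cost of a factor $1/\epsilon$. This yields only $\epsilon\|u_\epsilon\|\leq C$ and $\|Gu_\epsilon\|^2\leq C(1+\|u_\epsilon\|)$, \emph{not} a uniform bound on $\|Gu_\epsilon\|$; and hypothesis (iv) carries no rate, so it cannot upgrade $\|Gu_\epsilon\|\leq C(1+\|u_\epsilon\|)^{1/2}$ to $\|u_\epsilon\|\leq C$. (The paper's own proof of Theorem \ref{th3-1} exhibits exactly this obstruction: the energy estimate there also stops at $\varepsilon\|u_\varepsilon\|\leq C$, and two further steps using the structural hypothesis (A3) are needed to bound $u_\varepsilon$.) The repair is to use the full strength of (v) with a well-chosen $v$ rather than $v=0$: one first shows that $G$ is onto --- a monotone, demicontinuous, everywhere-defined operator is maximal monotone, and for $G_\lambda=G+\lambda I$ hypothesis (v) gives $\lambda\|u_\lambda\|\leq C$ hence $\|Gu_\lambda\|\leq C$, hence $\|u_\lambda\|\leq C$ by (iv), and demiclosedness of maximal monotone graphs passes to the limit --- and then takes $v=G^{-1}f$ in (v), which makes the offending term $\langle f-Gv,u_\epsilon\rangle$ vanish and delivers the uniform bounds on $\|Gu_\epsilon\|$ and, via (iv), on $\|u_\epsilon\|$.

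A second, lesser gap is the solvability of the regularized equation itself. Your justification rests on $(\epsilon I+L)^{-1}$ being compact on all of $H$; this presumes self-adjointness of $L$ and $\dim N(L)<\infty$, neither of which is assumed (and in the applications motivating \cite{BN78}, such as the wave operator, $N(L)$ is infinite-dimensional, where $(\epsilon I+L)^{-1}=\epsilon^{-1}I$ on $N(L)$ is certainly not compact). More importantly, $L+\epsilon I+G$ is not monotone, since (iii) permits $\langle Lu,u\rangle$ to be as negative as $-\frac{1}{\gamma}\|Lu\|^2$, so Browder--Minty does not apply directly and a Galerkin scheme needs exactly the delicate uniform bounds discussed above for its finite-dimensional approximations. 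This step should instead be handled by reformulating the problem through $L^{-1}$ on $R(L)$ (or by a further Yosida-type regularization of $L$), as in the source.
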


Obviously,  the function $u\in D(L)$ is a weak solution of the problem \eqref{eq2-1}--\eqref{eq2-3} if and only if
\begin{equation*}
Lu+g(u)=\hat{f},
\end{equation*}
where $\hat{f}= \rho^{-1} f$.

We make the following assumption on $\hat{f}$.

(A3) Let $\hat{f}\in L^\infty(\Omega)$ have the decomposition $\hat{f} = f^* +f^{**}$, where $f^*\in R(L)$ and $\min\{g(-\infty), g(+\infty)\} + \epsilon \leq f^{**} \leq \max\{g(-\infty), g(+\infty)\} - \epsilon$ for a.e. $(t,x)\in\Omega$ and some $\epsilon>0$.

\begin{theorem}\label{th3-1}
Let $T$ satisfy \eqref{eq1-4},  and let {\rm (A1)}, {\rm (A2)} and {\rm (A3)} hold and $a$ be an irrational number. Assume that $g$ is a monotone and continuous function and satisfies $|g(u)|<M$ for some constant $M>0$ and any $u\in R$, then there exists at least one periodic solution to the problem \eqref{eq2-1}--\eqref{eq2-3}.
\end{theorem}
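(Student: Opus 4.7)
The weak formulation of \eqref{eq2-1}--\eqref{eq2-3} reduces to solving the operator equation $Lu+g(u)=\hat f$ in $L^2(\Omega)$, where $\hat f=\rho^{-1}f$. The plan is to apply Lemma~\ref{lem3-1} with $H=L^2(\Omega)$ and $G$ the Nemytskii operator $u\mapsto g(u)$. Hypotheses (i) and (ii) are already supplied by Proposition~\ref{pro2-2}, and (iii) follows from \eqref{eq2-7}: for any $u\in D(L)$, decomposing $u=u_0+u_1$ with $u_0\in N(L)$ and $u_1\in R(L)\cap D(L)$ gives $\langle Lu,u\rangle=\langle Lu_1,u_1\rangle=\langle L^{-1}(Lu),Lu\rangle\ge -\frac{1}{\gamma}\|Lu\|^2$. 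The real obstruction is that $|g|<M$ forces $G$ to be bounded, so the coercivity condition (iv) fails outright. We therefore regularise and pass to the limit, with (A3) eventually providing the decisive a priori bound through a Landesman--Lazer-type argument.

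For each $n\in\mathbb{N}^+$ introduce the perturbed operator $G_n(u):=g(u)+u/n$, which is monotone, continuous, and satisfies $\|G_n u\|\ge\|u\|/n-\widetilde M$ with $\widetilde M:=M(\int_\Omega\rho\,{\rm d}t\,{\rm d}x)^{1/2}$, so (iv) holds. For (v) expand
\[
\langle G_n u-G_n v,u\rangle=\langle g(u)-g(v),u-v\rangle+\langle g(u)-g(v),v\rangle+\frac{1}{n}\|u\|^2-\frac{1}{n}\langle v,u\rangle,
\]
drop the first term by monotonicity, estimate the second by $-2\widetilde M\|v\|$, and use $\|G_n u\|^2\le 2\widetilde M^{\,2}+2\|u\|^2/n^2$. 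Choosing $\sigma=\gamma/2<\gamma$ and $n$ large, the coercive term $\|u\|^2/n$ dominates $\|G_n u\|^2/\sigma$ up to a term depending only on $v$, so (v) holds. Lemma~\ref{lem3-1} then produces $u_n\in D(L)$ with $Lu_n+g(u_n)+u_n/n=\hat f$.

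The core of the argument is a uniform bound on $u_n$. Write $u_n=u_n^0+u_n^1$ with $u_n^0\in N(L)$ and $u_n^1\in R(L)$. Projecting the equation onto $R(L)$ and using $f^*\in R(L)$ from (A3) yields $Lu_n^1=f^*+Qf^{**}-Qg(u_n)-u_n^1/n$, where $Q$ is the orthogonal projection onto $R(L)$; combined with \eqref{eq2-6} and $|g|<M$ this delivers $\|Lu_n^1\|\le C$, and then \eqref{eq2-8} yields $\|u_n^1\|_{L^\infty(\Omega)}\le C'$, both uniform in $n$. For $u_n^0$ we argue by contradiction: if $\|u_n^0\|\to\infty$ along a subsequence, set $w_n:=u_n^0/\|u_n^0\|$; since $N(L)$ is finite-dimensional by Proposition~\ref{pro2-1}, all norms agree there, producing (along a further subsequence) a uniform limit $w_n\to w$ with $w\in N(L)$ and $\|w\|=1$. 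Self-adjointness of $L$ together with the orthogonalities $u_n^1\perp w_n$ and $f^*\perp w_n$ collapses the equation to
\[
\frac{\|u_n^0\|}{n}=\langle f^{**}-g(u_n),w_n\rangle.
\]
On $\{w>0\}$ one has $u_n=\|u_n^0\|w_n+u_n^1\to+\infty$ pointwise, since $\|u_n^1\|_{L^\infty}$ is bounded while $\|u_n^0\|w_n\to+\infty$, so $g(u_n)\to g(+\infty)$; symmetrically $g(u_n)\to g(-\infty)$ on $\{w<0\}$. Dominated convergence (with dominant constant $M$) and the strict separation in (A3) then force the right-hand side to converge to a number $\le-\epsilon\int_\Omega|w|\rho\,{\rm d}t\,{\rm d}x<0$, contradicting $\|u_n^0\|/n\ge 0$.

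Once $\{u_n\}$ is bounded in $L^2(\Omega)$, strong convergence along a subsequence follows: $u_n^0$ lies in the finite-dimensional space $N(L)$, and $u_n^1=L^{-1}(Lu_n^1)$ converges strongly by the compactness of $L^{-1}$ on $R(L)$ together with the boundedness of $Lu_n^1$. Hence $u_n\to u$ in $L^2(\Omega)$ and, after one more extraction, a.e.\ on $\Omega$; dominated convergence gives $g(u_n)\to g(u)$ in $L^2(\Omega)$, while $u_n/n\to 0$. Closedness of $L$ then passes the limit through $Lu_n+g(u_n)+u_n/n=\hat f$ to yield $Lu+g(u)=\hat f$, as desired. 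The hardest point is the Landesman--Lazer step controlling $u_n^0$: the finite dimension of $N(L)$, the bounded monotone structure of $g$ with existing limits at $\pm\infty$, the self-adjointness of $L$, and the strict gap in (A3) all enter essentially there.
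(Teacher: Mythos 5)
Your proposal is correct, and it reaches the a priori estimates by a genuinely different route than the paper. The regularization $G_n u=g(u)+u/n$ and the verification of hypotheses (i)--(v) of Lemma \ref{lem3-1} coincide with the paper's Step 1 (the paper writes $\varepsilon$ for your $1/n$). The divergence is in how (A3) is exploited to bound the approximate solutions. The paper never decomposes the equation: it introduces $w$ with $Lw=f^*$ and auxiliary functions $z$, $z_\xi$ solving $g(z)=f^{**}$ and $g(z_\xi)=f^{**}+\xi$ (which is where the $\epsilon$-gap in (A3) enters), and by testing against $u_\varepsilon-w$ and $u_\varepsilon-z_\xi$ it derives successively $\varepsilon\|u_\varepsilon\|\le C$, $\|Lu_\varepsilon\|\le C$, an $L^1$ bound via a duality argument over all $\xi$ with $\|\xi\|\le\epsilon/2$, and finally an $L^\infty$ bound. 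You instead project onto $R(L)$ to get $\|Lu_n^1\|\le C$ and $\|u_n^1\|_{L^\infty}\le C$ almost immediately from \eqref{eq2-6} and \eqref{eq2-8} (note you do need $n$ large so that the $\|u_n^1\|/n$ term can be absorbed, which you have), and then control the null-space component by a Landesman--Lazer contradiction argument, using the $\epsilon$-gap in (A3) to force the limit of $\langle f^{**}-g(u_n),w_n\rangle$ strictly below zero while the equation forces it to be $\ge 0$. Your route is arguably cleaner: it avoids the paper's somewhat terse Step 4 (where the uniform $L^\infty$ bound on the null-space part really rests on the $L^1$ bound of Step 3 plus norm equivalence on the finite-dimensional $N(L)$, a point the paper glosses over), and it makes transparent exactly where the strict inequalities in (A3) are needed. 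The limit passage also differs mildly: the paper invokes Minty's monotonicity trick, while you use strong $L^2$ convergence, a.e.\ convergence, dominated convergence for $g(u_n)$, and closedness of $L$; both are valid since compactness of $L^{-1}$ and $\dim N(L)<\infty$ already give strong convergence. The price of your approach is that it uses the existence of the limits $g(\pm\infty)$ and the sign structure of (A3) in a more delicate pointwise argument, whereas the paper's translation argument is more robust to reformulation; but as written your argument is complete and sound.
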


\begin{proof}
Here, we only consider the case where $g$ is non-decreasing. The  case where $g$ is non-increasing is similar to
this case.

Since $g$ does not satisfy  the condition (iv) in Lemma \ref{lem3-1},  we first consider the perturbation equation
\begin{equation}\label{eq3-1}
Lu +G_\varepsilon u= \hat{f}, \ \ \forall \varepsilon>0,
\end{equation}
where $G_\varepsilon u := \varepsilon u+ g(u)$. Clearly, $G_\varepsilon$ is non-decreasing and satisfies
\begin{equation*}
\|G_\varepsilon u\| \rightarrow \infty, \ \ {\rm as} \ \ \|u\| \rightarrow \infty.
\end{equation*}

In what follows, we divide the proof into 4 steps.

\emph{step 1}. Existence of a solution $u_\varepsilon$ of the equation \eqref{eq3-1}.

For any $u,v \in L^2(\Omega)$, a direct calculation yields
\begin{eqnarray*}
\langle G_\varepsilon u- G_\varepsilon v, u-v\rangle &=&\int_\Omega (G_\varepsilon u- G_\varepsilon v)(u-v) \rho{\rm d}t {\rm d}x\\
&=&\int_\Omega (G_\varepsilon u- G_\varepsilon v)\Big{(}\frac{(G_\varepsilon u- G_\varepsilon v) -(g(u)-g(v))}{\varepsilon}\Big{)}\rho{\rm d}t {\rm d}x\\
&=&\frac{1}{\varepsilon}\int_\Omega (G_\varepsilon u- G_\varepsilon v)^2\rho{\rm d}t {\rm d}x -\frac{1}{\varepsilon}\int_\Omega (G_\varepsilon u- G_\varepsilon v)(g(u)-g(v))\rho{\rm d}t {\rm d}x\\
&\geq& \frac{1}{\varepsilon}\Big(\|G_\varepsilon u\|^2 -C(v)\|G_\varepsilon u\|-C(v)\Big),
\end{eqnarray*}
where the last inequality follows from the facts that $|g(u)|<M$ for any $u\in R$ and the continuous embedding $L^2(\Omega) \hookrightarrow L^1(\Omega)$. Therefore, we choose $\varepsilon$ small enough so that $\sigma <\gamma$ and
\begin{equation*}
\langle G_\varepsilon u- G_\varepsilon v, u\rangle \geq \frac{1}{\sigma}\|G_\varepsilon u\|^2 -C(v).
\end{equation*}

Consequently, by Proposition \ref{pro2-2} and Lemma \ref{lem3-1}, there exists a solution $u_\varepsilon \in L^2(\Omega)$ of \eqref{eq3-1} for $\varepsilon$ small enough.

\emph{step 2}. $\|Lu_\varepsilon\|\leq C$ (here and below, $C$ denotes the various constants independent of $\varepsilon$).

Noting $\hat{f} \in L^\infty(\Omega)$ and $f^*\in R(L)$, there exists $w \in L^\infty(\Omega)$ such that $Lw =f^*$. Since $u_\varepsilon$ is a solution of \eqref{eq3-1}, we have
\begin{equation}\label{eq3-a}
\varepsilon u_\varepsilon + L(u_\varepsilon -w)+ g(u_\varepsilon)=f^{**}.
\end{equation}

Moreover, in virtue of $g(-\infty) + \epsilon \leq f^{**} \leq g(+\infty)- \epsilon$, there exists $z\in L^\infty(\Omega)$ such that
$g(z)=f^{**}$. Thus,
\begin{equation}\label{eq3-2}
\varepsilon u_\varepsilon + L(u_\varepsilon -w)+ g(u_\varepsilon)-g(z)=0.
\end{equation}
Taking the inner product of \eqref{eq3-2} with $u_\varepsilon -w$, we have
\begin{equation}
\varepsilon\langle u_\varepsilon, u_\varepsilon \rangle =\varepsilon \langle u_\varepsilon, w\rangle -\langle L(u_\varepsilon -w), u_\varepsilon -w\rangle - \langle g(u_\varepsilon)-g(z), u_\varepsilon -w\rangle.
\end{equation}
Since $g$ is monotone and bounded, by \eqref{eq2-7}, we have
\begin{eqnarray*}
\varepsilon\langle u_\varepsilon, u_\varepsilon \rangle &\leq& \frac{\varepsilon}{2} (\|u_\varepsilon\|^2+\|w\|^2) +\frac{1}{\gamma}\|L(u_\varepsilon-w)\|^2 \\
& \ &  -\langle g(u_\varepsilon)-g(z), u_\varepsilon -z\rangle +\langle g(u_\varepsilon)-g(z), w -z\rangle\\
&\leq& \frac{\varepsilon}{2} (\|u_\varepsilon\|^2+\|w\|^2) +\frac{1}{\gamma}\|L(u_\varepsilon-w)\|^2 +C.
\end{eqnarray*}
By \eqref{eq3-2} and the above estimation, we have
\begin{eqnarray*}
\varepsilon \|u_\varepsilon\|^2 &\leq& \frac{2}{\gamma}\|g(u_\varepsilon)-g(z)+ \varepsilon u_\varepsilon\|^2 +C\\
&\leq& \frac{2\varepsilon^2}{\gamma} \|u_\varepsilon\|^2 +\frac{4\varepsilon C}{\gamma} \|u_\varepsilon\|+C,
\end{eqnarray*}
which implies
\begin{equation}\label{eq3-3}
\varepsilon \|u_\varepsilon\| \leq C.
\end{equation}
Therefore, we have
\begin{equation*}
\|Lu_\varepsilon\| = \|\hat{f} -\varepsilon u_\varepsilon -g(u_\varepsilon)\| \leq \|\hat{f}\| +\varepsilon \|u_\varepsilon\| +\|g(u_\varepsilon)\| \leq C.
\end{equation*}

\emph{step 3}. $\|u_\varepsilon\|_{L^1(\Omega)} \leq C$.

According to $g(-\infty) + \epsilon \leq f^{**} \leq g(+\infty)- \epsilon$, then, for any $\xi \in L^\infty(\Omega)$ with $\|\xi\| \leq \frac{\epsilon}{2}$, there exists $z_\xi \in L^\infty(\Omega)$ with $\|z_\xi\|_{L^\infty(\Omega)} \leq M_\epsilon$
for some $M_\epsilon$ independent of $\xi$ such that
\begin{equation*}
g(z_\xi) =f^{**}+\xi.
\end{equation*}
Substituting $f^{**}= g(z_\xi) -\xi$ into \eqref{eq3-a}, we have
\begin{equation*}
\varepsilon u_\varepsilon + L(u_\varepsilon -w)= - g(u_\varepsilon)+ g(z_\xi) -\xi.
\end{equation*}
Taking the inner product of the above equation with $u_\varepsilon - z_\xi$, we obtain
\begin{eqnarray*}
\langle\varepsilon u_\varepsilon, u_\varepsilon - z_\xi\rangle +\langle L(u_\varepsilon -w), u_\varepsilon - z_\xi\rangle &=& - \langle g(u_\varepsilon)- g(z_\xi), u_\varepsilon - z_\xi\rangle-\langle\xi,u_\varepsilon - z_\xi\rangle\\
&\leq& -\langle\xi,u_\varepsilon - z_\xi\rangle.
\end{eqnarray*}
Thus, by \eqref{eq2-7} and \eqref{eq3-3} we have
\begin{eqnarray*}
\langle\xi, u_\varepsilon\rangle &\leq& -\varepsilon\langle u_\varepsilon,u_\varepsilon \rangle +\varepsilon \langle u_\varepsilon, z_\xi\rangle +\langle \xi, z_\xi\rangle \\
& \ &-\langle L(u_\varepsilon -w), u_\varepsilon - w\rangle-\langle L(u_\varepsilon -w), w - z_\xi\rangle\\
&\leq& \varepsilon\|u_\varepsilon\|\|z_\xi\| +  \frac{\epsilon}{2}\|z_\xi\|_{L^1(\Omega)} + \frac{1}{\gamma}\|L(u_\varepsilon -w)\|^2 + \|L(u_\varepsilon -w)\|\|w-z_\xi\|\\
&\leq& C.
\end{eqnarray*}
Since $\xi \in L^\infty(\Omega)$ with $\|\xi\| \leq \frac{\epsilon}{2}$ is arbitrary,  we get
\begin{equation*}
\|u_\varepsilon\|_{L^1(\Omega)} \leq C.
\end{equation*}

\emph{step 4}. $\|u_\varepsilon\|_{L^\infty(\Omega)} \leq C$.

Decompose $u_\varepsilon = u_{1\varepsilon} + u_{2\varepsilon}$ where $u_{1\varepsilon} \in R(L)$ and $u_{2\varepsilon} \in N(A)$. By Proposition \ref{pro2-2}, it is easy to see  $u_{1\varepsilon} \in L^\infty(\Omega)$.
Noting $\dim N(L) <\infty$, all norms are  equivalent, which shows $u_{2\varepsilon} \in L^\infty(\Omega)$. Thus, $u_{\varepsilon} \in L^\infty(\Omega)$.

\emph{step 5}. Passage to the limit as $\varepsilon \rightarrow \infty$.

Since $L^{-1}$ is compact and  $\dim N(L) <\infty$, noting $\|u_\varepsilon\|_{L^\infty(\Omega)} \leq C$, we can find a sequence $\{\varepsilon_i\}$ satisfying $\varepsilon_i \rightarrow 0$ such that
\begin{eqnarray*}
&u_{1\varepsilon_i} \rightarrow u_1 \  \ {\rm strongly \ in}  \ L^2(\Omega),\\
&u_{2\varepsilon_i} \rightarrow u_2 \ \ {\rm strongly \ in}  \ L^2(\Omega),
\end{eqnarray*}
which implies $u_{\varepsilon_i} \rightarrow u$ strongly in $L^2(\Omega)$ with $u_{\varepsilon_i}:=u_{1\varepsilon_i}+u_{2\varepsilon_i}$ and $u:=u_{1}+u_{2}$.

Since $g$ is monotone, we have
\begin{equation}\label{eq3-5}
\langle g(u_{\varepsilon_i})-g(\psi), u_{\varepsilon_i} -\psi\rangle \geq 0, \ \ \forall \psi\in L^2(\Omega).
\end{equation}
Moreover, since $u_{\varepsilon_i}$ satisfy \eqref{eq3-1}, i.e.,
\begin{equation}\label{eq3-6}
g(u_{\varepsilon_i})=\hat{f} -Lu_{\varepsilon_i} - \varepsilon_i u_{\varepsilon_i}, \ \ \forall \varepsilon>0,
\end{equation}
substituting \eqref{eq3-6} into \eqref{eq3-5}, it follows
\begin{equation*}
\langle \hat{f} -Lu_{\varepsilon_i} - \varepsilon_i u_{\varepsilon_i}-g(\psi), u_{\varepsilon_i} -\psi\rangle \geq 0, \ \ \forall \psi\in L^2(\Omega).
\end{equation*}
Passing to the limit as $i\rightarrow\infty$, we obtain
\begin{equation*}
\langle \hat{f} -Lu - g(\psi), u -\psi\rangle \geq 0, \ \ \forall \psi\in L^2(\Omega).
\end{equation*}
Now, taking $\psi = u-tv$ for any $v \in L^2(\Omega)$, we have $u$ satisfies
\begin{equation*}
Lu + g(u) =\hat{f}.
\end{equation*}
We complete the proof.
\end{proof}

\vskip 5mm
{\bf Acknowledgements}

The author sincerely thanks the anonymous referees for constructive comments
and suggestions.


















\section*{References}

\end{document}